\theoremstyle{plain}
\newtheorem{theorem}{Theorem}[section]
\newtheorem{corollary}[theorem]{Corollary}
\newtheorem{proposition}[theorem]{Proposition}
\newtheorem*{theorem*}{Theorem}
\newtheorem*{corollary*}{Corollary}
\theoremstyle{definition} 
\newtheorem{definition}[theorem]{Definition}
\newtheorem{remark}[theorem]{Remark}
\newtheorem{closingremark}[theorem]{Closing Remark}
\newtheorem{example}[theorem]{Example}
\newcommand{\fm}{\mbox{$\mathfrak{m}$}}
\newcommand{\fn}{\mbox{$\mathfrak{n}$}}
\newcommand{\fp}{\mbox{$\mathfrak{p}$}}
\newcommand{\fV}{\mbox{$\mathfrak{V}$}}
\newcommand{\fI}{\mbox{$\mathfrak{I}$}}
\newcommand{\bn}{\mbox{$\mathbb{N}$}}
\newcommand{\bz}{\mbox{$\mathbb{Z}$}}
\newcommand{\bc}{\mbox{$\mathbb{C}$}}
\newcommand{\ba}{\mbox{$\mathbb{A}$}}
\newcommand{\mcg}{\mbox{$\mathcal{G}$}}
\newcommand{\height}{\mbox{${\rm height}$}}
\newcommand{\rt}{\mbox{${\rm rt}$}} 
\newcommand{\Spec}{\mbox{${\rm Spec}$}} 
\newcommand{\Max}{\mbox{${\rm Max}$}}
\newcommand{\rees}{\mbox{${\bf R}$}}
\newcommand{\sym}{\mbox{${\bf S}$}}
\newcommand{\agr}{\mbox{${\bf G}$}}
\title{The relation type of affine algebras and algebraic
  varieties}
\author{{\sc  Francesc Planas-Vilanova }}
\date{\today}
\subjclass[2010]{Primary: 13A02, 13D03, 14A10; Secondary: 13D02,
  14H50}
\begin{document}

\begin{abstract}
We introduce the notion of relation type of an affine algebra and
prove that it is well defined by using the Jacobi-Zariski exact
sequence of Andr\'e-Quillen homology. In particular, the relation type
is an invariant of an affine algebraic variety. Also as a consequence
of the invariance, we show that in order to calculate the relation
type of an ideal in a polynomial ring one can reduce the problem to
trinomial ideals. When the relation type is at least two, the extreme
equidimensional components play no role. This leads to the non
existence of affine algebras of embedding dimension three and relation
type two.
\end{abstract}

\maketitle

\section{Introduction}\label{introduction}

Let $A=R/I=k[x_1,\ldots ,x_n]/I$ be an affine $k$-algebra, where $k$
is a field, $x_1,\ldots ,x_n$ are variables over $k$ and
$I=(f_1,\ldots ,f_s)$ is an ideal of the polynomial ring
$R=k[x_1,\ldots ,x_n]$.

In this note we introduce the following invariant of $A$: the {\em
  relation type of} $A$ is defined as $\rt(A)=\rt(I)$, where $\rt(I)$
stands for the relation type of the ideal $I$. 

Recall that if $\rees(I)=R[It]=\oplus_{q\geq 0}I^{q}t^q$ is the {\em
  Rees algebra} of $I$ and $\varphi:S=R[t_1,\ldots ,t_s]\to\rees(I)$
is the natural graded polynomial presentation sending $t_i$ to $f_it$,
then $L=\ker(\varphi)=\oplus_{q\geq 1}L_q$, referred to as the ideal
of {\em equations} of $I$, is a graded ideal of $S$ and the relation
type of $I$, denoted by $\rt(I)$, is the least integer $N\geq 1$ such
that $L$ is generated by its components of degree at most
$N$. Concerning the equations of an ideal and its relation type, see
for instance, and with no pretense of being exhaustive, \cite{hku},
\cite{hmv}, \cite{hsv}, \cite{morey}, \cite{mu}, \cite{mp},
\cite{planas}, \cite{tchernev}, \cite{vasconcelos-Crelle},
\cite{vasconcelos-Comput}, \cite{vasconcelos-Integral}, and the
references therein. 

By means of the Jacobi-Zariski exact sequence of Andr\'e-Quillen
homology, we prove that the definition of $\rt(A)$ does not depend on
the presentation of $A$. In particular, we obtain an invariant for
affine algebraic varieties. Concretely, if $V$ is an affine algebraic
$k$-variety, the {\em relation type} of $V$ is defined as
$\rt(V)=\rt(k[V])$, the relation type of its coordinate ring $k[V]$.
Another consequence is that in order to calculate the relation type of
an ideal in a polynomial ring one can reduce the problem to ideals
generated by trinomials, though at the cost of introducing more
generators and more variables.

We then study the connection between the equidimensional decomposition
of a radical ideal $I$ of $R=k[x_1,\ldots ,x_n]$ and its relation type
$\rt(I)$. We conclude that the equidimensional components of dimension
$1$ and $n$ are not relevant whenever the relation type is at least
two. As a corollary we obtain a somewhat surprising result, namely,
that there are no affine $k$-algebras of embedding dimension three and
relation type two.

Due to the aforementioned result, the examples we provide are
essentially focussed on affine space curves. At this point one should
emphasize that the explicit calculation of the equations of an ideal
is computationally a very expensive task. The reduction to trinomial
ideals, unfortunately, does not seem to improve, in general, the
approach to the problem. It would be desirable to obtain a wide range
of irreducible affine space curves with prescribed relation type. It
would also be interesting to understand better the geometric meaning
of the relation type of an algebraic variety.

Notice that, to our knowledge, there is at least another notion also
named {\em relation type of an algebra}. Indeed, in
\cite[Definition~2.7]{vasconcelos-Six}, W.V. Vasconcelos defines the
relation type of a standard algebra $A=k[x_1,\ldots ,x_n]/I$ as the
least integer $s$ such that $I=(I_1,\ldots ,I_s)$, where $I\subset
(x_1,\ldots ,x_n)^2$ is a homogeneous ideal of the polynomial ring
$k[x_1,\ldots ,x_n]$ over the field $k$ and $I_i$ is the $i$-th graded
component of $I$. It is clear that both definitions do not coincide,
even in the homogeneous case; for instance, take $I=(f)$ a principal
ideal of $k[x_1,\ldots ,x_n]$ generated by a homogeneous polynomial
$f$ of degree $p\geq 2$.

The paper is organized as follows. In Section~\ref{notations}, we set
the notations used throughout. In Section~\ref{invariance}, we prove
the invariance theorem and the reduction to
trinomials. Section~\ref{equidimensionalvsrt} is devoted to study the
effect of an equidimensional decomposition in the computation of the
relation type. Finally, in Sections~\ref{examples} and
\ref{spacecurves}, we give some examples in embedding dimension three.

\section{Notations and preliminaries}\label{notations}

We begin by setting some notations. Let $I=(f_1,\ldots ,f_s)$ be an
ideal of a Noetherian ring $R$ and let $\rees(I)=R[It]=\oplus_{q\geq
  0}I^{q}t^q$ be its Rees ring. Let $S=R[t_1,\ldots ,t_s]$ be the
polynomial ring over $R$ and $\varphi:S\to\rees(I)$ the graded
polynomial presentation of $\rees(I)$ sending $t_i$ to $f_it$. Set
$L=\ker(\varphi)=\oplus_{q\geq 1}L_q$, the graded ideal of equations
of $I$. Given $q\geq 1$, let $L\langle q\rangle \subseteq L$ be the
ideal generated by the homogeneous equations of $I$ of degree at most
$q$. The relation type of $I$, denoted by $\rt(I)$, is the least
integer $N\geq 1$ such that $L=L\langle N\rangle$. Although $L$
depends on the chosen generating set, $\rt(I)$ does not. Indeed, let
$\sym(I)$ be the symmetric algebra of $I$ and let
$\alpha:\sym(I)\to\rees(I)$ be the canonical graded morphism induced
by the identity on degree $1$. Let $0\to Z_1\to R^s\to I\to 0$ be the
presentation associated to $f_1,\ldots ,f_s$, where $Z_1$ stands for
the first module of syzygies of $I$. Applying the symmetric functor,
one gets the graded exact sequence
\begin{eqnarray*}
0\rightarrow Z_1S\longrightarrow S=R[t_1,\ldots
  ,t_s]=\sym(R^s)\xrightarrow{\Phi}\sym(I)\rightarrow 0,
\end{eqnarray*}
where $Z_1S$ is the ideal of $S$ generated by the elements of $Z_1$
regarded as linear forms of $S$. Thus $\varphi=\alpha\circ\Phi:S\to
\rees(I)$ and $Z_1S$ can be interpreted as the ideal $L\langle
1\rangle$ of linear equations of $I$. For each $q\geq 2$, set
$E(I)_q=\ker(\alpha_q)/I\cdot\ker(\alpha_{q-1})$ and call it the
module of {\em effective $q$-relations} of $I$. One can show that the
so-called module of {\em fresh generators in degree} $q$, $(L/L\langle
q-1\rangle)_q=L_q/S_1L_{q-1}$, is isomorphic to $E(I)_q$ (see, e.g.,
\cite[Before Definition~1.9]{vasconcelos-Integral} and
\cite[Theorem~2.4]{planas}). In particular, $L_q/S_1L_{q-1}$ does not
depend on the presentation of $I$. Furthermore, $\rt(I)$ can be
thought as $\rt(I)=\min\{ r\geq 1\mid E(I)_q=0\mbox{ for all }q\geq
r+1\}$.  It follows that $\rt(I)$ is independent too of the
presentation of $I$ and that of $\rees(I)$.

Observe that $\rt(I)=1$ if and only if $\alpha:\sym(I)\to \rees(I)$ is
an isomorphism. In such a case $I$ is said to be of {\em linear
  type}. If $\alpha_2:\sym_2(I)\to I^2$ is an isomorphism, $I$ is said
to be {\em syzygetic}. It is well-known that ideals generated by a
regular sequence (more in general, by a $d$-sequence) are of linear
type (see, e.g., \cite{hsv} or \cite{hmv}). We recall too that the
relation type is a local invariant, in other words,
$\rt(I)=\sup\{\rt(I_{\mathfrak{p}})\mid
\fp\in\Spec(A)\}=\sup\{\rt(I_{\mathfrak{m}})\mid \fm\in\Max(A)\}$
(see, e.g., \cite[Example~3.2]{planas}; here $\Max$ stands for the set
of maximal ideals). Moreover, it is enough to localize at primes $\fp$
or maximals $\fm$ that contain $I$ (considering that the improper
ideal $I=R$ has relation type $1$).

More generally, let $R$ be a Noetherian ring and let $U=\oplus _{q\geq
  0}U_{q}$ be a standard $R$-algebra, i.e., $U_0=R$ and $U$ is an
$R$-algebra finitely generated by elements $f_1,\ldots ,f_s$ of $U_1$.
Let $\sym(U_1)$ be the symmetric algebra of $U_1$ and
$\alpha:\sym(U_1)\to U$ be the canonical graded morhism induced by the
identity on $U_1$. For $q\geq 2$, define
$E(U)_q=\ker(\alpha_q)/U_1\cdot \ker(\alpha_{q-1})$. If
$\varphi:S=R[t_1,\ldots ,t_s]\to U$ is the graded polynomial
presentation of $U$ sending $t_i$ to $f_i$ and $L=\ker(\varphi)$, then
$E(U)_q\cong L_q/S_1L_{q-1}$, for all $q\geq 2$
(\cite[Theorem~2.4]{planas}). The relation type of $U$ can be defined
as $\rt(U)=\min\{ r\geq 1\mid E(U)_q=0\mbox{ for all }q\geq r+1\}$,
i.e., the maximum degree appearing in a minimal generating set of
equations of $U$. For $U=\rees(I)$ one has $E(\rees(I))_q=E(I)_q$ and
$\rt(\rees(I))=\rt(I)$, recovering the definitions above.

The double expression of $E(U)_q$ as $\ker(\alpha_q)/U_1\cdot
\ker(\alpha_{q-1})$ and $L_q/S_1L_{q-1}$ has an advantage; while the
first is canonical, the second is easier to deal with. To prove that
the relation type of an ideal of a polynomial ring solely depends on
the quotient ring, we use a third expression. Indeed, for each $q\geq
2$, there exists a graded isomorphism of $R$-modules $E(U)_q\cong
H_1(R,U,R)_q$, where $H_1(R,U,R)=\oplus_{q\geq 0}H_1(R,U,R)_q$ stands
for the homology of Andr\'e-Quillen (see \cite{andre} refering to the
homology of commutative rings and particularly
\cite[Remark~2.3]{planas} for this result).

\section{The invariance of the relation type}\label{invariance}

We begin by proving the invariance with respect to the quotient of the
relation type of an ideal in a polynomial ring.

\begin{theorem}\label{th-invariance}
Let $k$ be a field and $x_1,\ldots ,x_n$ and $y_1,\ldots ,y_m$
variables over $k$. Let $I$ be an ideal of $R=k[x_1,\ldots ,x_n]$
and let $J$ be an ideal of $S=k[y_1,\ldots ,y_m]$. Suppose that
there exists an isomorphism of $k$-algebras $R/I\cong S/J$. Then
$\rt(I)=\rt(J)$.
\end{theorem}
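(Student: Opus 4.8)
The plan is to compare any two presentations through a single common refinement and to detect the relation type by Andr\'e--Quillen homology, the passage between presentations being governed by the Jacobi--Zariski sequence. Write $A$ for the common quotient and $\pi_R\colon R\to A$, $\pi_S\colon S\to A$ for the two surjections, identified through the given isomorphism. First I would introduce $T=k[x_1,\ldots,x_n,y_1,\ldots,y_m]$ together with the surjection $\pi\colon T\to A$ restricting to $\pi_R$ on the $x$'s and to $\pi_S$ on the $y$'s, and set $K=\ker(\pi)$. Regarding $T=R[y_1,\ldots,y_m]$ and choosing $g_j\in R$ with $\pi_R(g_j)=\pi_S(y_j)$, one gets $K=IT+(y_1-g_1,\ldots,y_m-g_m)$; regarding $T=S[x_1,\ldots,x_n]$ one likewise gets $K=JT+(x_1-h_1,\ldots,x_n-h_n)$. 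Thus it is enough to prove the one-sided statement $\rt(I)=\rt(K)$ for the polynomial extension $R\subseteq T$, since the symmetric argument then gives $\rt(J)=\rt(K)$, whence $\rt(I)=\rt(J)$. Applying the $k$-algebra automorphism $y_j\mapsto y_j+g_j$ of $T$, which fixes $R$ and preserves relation type, I may and will assume $g_j=0$, so that $K=IT+(y_1,\ldots,y_m)=:\mathfrak{a}$ and $T/\mathfrak{a}\cong A$.

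Next I would pass to homology. By the isomorphism recalled above, applied to the standard $T$-algebra $\rees(\mathfrak{a})$ and to the standard $R$-algebra $\rees(I)$, one has $E(\mathfrak{a})_q\cong H_1(T,\rees(\mathfrak{a}),T)_q$ and $E(I)_q\cong H_1(R,\rees(I),R)_q$ for every $q\geq 2$; here $\rees(\mathfrak{a})$ and $\rees(I)$ are the Rees algebras over $T$ and over $R$, and the coefficient modules $T$, $R$ are the residues of each Rees algebra modulo its irrelevant ideal. Since $k\to R$ and $k\to T$ are polynomial, hence smooth, the Jacobi--Zariski sequences of $k\to R\to\rees(I)$ and of $k\to T\to\rees(\mathfrak{a})$ have $H_1(k,R,-)=H_1(k,T,-)=0$ and cotangent terms $H_0(k,R,-)$, $H_0(k,T,-)$ concentrated in Rees-degree $0$; in positive degree they therefore yield $E(I)_q\cong H_1(k,\rees(I),R)_q$ and $E(\mathfrak{a})_q\cong H_1(k,\rees(\mathfrak{a}),T)_q$, now over the common base $k$.

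It then remains to compare the two Rees algebras. The substitution $y_j\mapsto 0$ defines a surjective graded $k$-algebra homomorphism $\rees(\mathfrak{a})\twoheadrightarrow\rees(I)$ whose kernel $\mathcal{K}$ is generated by the degree-$0$ classes $y_j$ and the degree-$1$ classes $y_jt$; these reflect that the regular sequence $y_1,\ldots,y_m$, regular on $T$ and modulo $I$, contributes to $\mathfrak{a}$ only the linear Koszul equations $y_jt_i-f_iu_j$ and $y_ju_l-y_lu_j$, so that no fresh relation of degree $\geq 2$ is produced. I would feed this surjection into the Jacobi--Zariski sequence for $k\to\rees(\mathfrak{a})\to\rees(I)$: because $\mathcal{K}$ is generated in degrees $\leq 1$, the relative terms $H_n(\rees(\mathfrak{a}),\rees(I),-)$, computed against a coefficient module concentrated in Rees-degree $0$, are carried in low Rees-degree, and the sequence forces $H_1(k,\rees(\mathfrak{a}),-)_q\cong H_1(k,\rees(I),-)_q$ in each degree $q\geq 2$. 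Matching the coefficient modules $T$ and $R$ along $\pi$ and combining with the previous step gives $E(\mathfrak{a})_q\cong E(I)_q$ for all $q\geq 2$, whence $\rt(\mathfrak{a})=\rt(I)$ and the theorem.

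The step I expect to be the real obstacle is this last comparison. The tempting shortcut---that $\rees(\mathfrak{a})$ should be the tensor product $\rees(I)\otimes_k\rees_{k[y]}((y_1,\ldots,y_m))$---is false, since the powers $\mathfrak{a}^q=\sum_{i+j=q}I^i(y)^jT$ overlap instead of splitting, so no K\"unneth formula applies. One must instead extract the degree-wise vanishing directly from the long exact sequence, controlling simultaneously the change of base ring ($T$ versus $R$ versus $k$), the change of coefficient module ($T$ versus $R$) along $\pi$, and the higher relative homology $H_n(\rees(\mathfrak{a}),\rees(I),-)$ for $n\geq 2$; the latter need not vanish outright, since the generators of $\mathcal{K}$ need not form a regular sequence when $m$ is large, and one has to verify that whatever survives is confined to Rees-degrees $\leq 1$ and so cannot interfere in degrees $q\geq 2$. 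It is precisely the bookkeeping of these gradings in the Jacobi--Zariski sequence that lets the smoothness of the polynomial extensions and the linear-type nature of the added variables do the work.
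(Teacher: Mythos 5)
Your overall strategy---detecting the relation type by Andr\'e--Quillen homology and passing between presentations via the Jacobi--Zariski sequence---is the same tool the paper uses, and your reduction through the common refinement $T$ to the one-sided statement $\rt(I)=\rt(\mathfrak{a})$, $\mathfrak{a}=IT+(y_1,\ldots,y_m)$, is sound. But the step you yourself flag as ``the real obstacle'' is a genuine gap, and it is exactly where the whole difficulty of the theorem sits. In the Jacobi--Zariski sequence for $k\to\rees(\mathfrak{a})\to\rees(I)$, the term $H_1(\rees(\mathfrak{a}),\rees(I),W)=(\mathcal{K}/\mathcal{K}^2)\otimes W$ is indeed killed in degrees $q\geq 2$ by your argument (generators of $\mathcal{K}$ in degrees $\leq 1$, coefficients $W$ concentrated in degree $0$); that gives surjectivity of $H_1(k,\rees(\mathfrak{a}),W)_q\to H_1(k,\rees(I),W)_q$. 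Injectivity, however, requires killing the image of $H_2(\rees(\mathfrak{a}),\rees(I),W)_q$, and for that term no such degree argument exists: by the universal-coefficient comparison for the cotangent complex, $H_2(\rees(\mathfrak{a}),\rees(I),W)$ receives contributions not only from $H_2(\rees(\mathfrak{a}),\rees(I),\rees(I))\otimes W$ but also from $\mathrm{Tor}_1^{\rees(I)}(\mathcal{K}/\mathcal{K}^2,W)$, and Tor against a degree-$0$ module is governed by the degrees of the \emph{relations} of $\mathcal{K}/\mathcal{K}^2$, not of its generators. Those relations encode the equations of the two Rees algebras---the very objects whose degrees the theorem is about---so confining them to degrees $\leq 1$ is not bookkeeping; it is essentially equivalent to what you are trying to prove. (As you note, $\mathcal{K}$ is in general not generated by a regular sequence, so $H_2$ does not vanish outright; the coefficient mismatch $T$ versus $R$ is a second, smaller, unaddressed issue of the same kind.)

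The paper sidesteps this wall by never comparing Rees algebras directly: it compares associated graded rings. In your setting one has $\agr_T(\mathfrak{a})\cong\agr_R(I)[y_1^*,\ldots,y_m^*]$, a \emph{polynomial}, hence smooth, extension of $\agr_R(I)$ (this is the content of the Kunz exercise cited in the paper, which in general gives $\agr(I)[y_1,\ldots,y_m]\cong\agr(J)[x_1,\ldots,x_n]$). Consequently, in the Jacobi--Zariski sequence for $R/I\to\agr(I)\to\agr(I)[y^*]$ the relative homology $H_n(\agr(I),\agr(I)[y^*],-)$ vanishes for \emph{all} $n\geq 1$ and in every degree, and the isomorphism $E(\agr(I))_q\cong E(\agr(\mathfrak{a}))_q$ for $q\geq 2$ falls out with no degree bookkeeping whatsoever. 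The price is that one must then return from the associated graded ring to the Rees algebra via the nontrivial but known equality $\rt(\agr(I))=\rt(\rees(I))$ (\cite[Proposition~3.3]{planas}, \cite[page~268]{hku}). If you replace your Rees-algebra comparison by this associated-graded comparison, your common-refinement reduction does yield a complete proof; as written, the proposal is missing its central step.
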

\begin{proof}
Let $\agr(I)=\rees(I)/I\rees(I)=\oplus_{q\geq 0}I^q/I^{q+1}$ be the
associated graded ring of $I$ and let $\agr(J)$ be the associated
graded ring of $J$.  Using \cite[Exercise~13, Chapter~V, \S~5]{kunz},
one deduces that there exists a graded isomorphism of $R/I$-algebras:
\begin{eqnarray*}
\agr(I)[y_1,\ldots ,y_m]\cong\agr(J)[x_1,\ldots ,x_n].
\end{eqnarray*}
Consider the natural augmentation morphisms. They induce the following
commutative diagram of homomorphisms of rings:
\begin{eqnarray*}
\xymatrix{ R/I \ar[r] \ar[d]^{\cong} & \agr(I)[y_1,\ldots ,y_m] \ar[r]
  \ar[d]^{\cong} & R/I \ar[d]^{\cong} \\ S/J \ar[r] &
  \agr(J)[x_1,\ldots ,x_n] \ar[r] & S/J.}
\end{eqnarray*}
This induces the graded isomorphism of homology groups:
\begin{eqnarray*}
H_1(R/I,\agr(I)[y_1,\ldots ,y_m],R/I)\cong H_1(S/J,\agr(J)[x_1,\ldots
  ,x_n],S/J).
\end{eqnarray*}
Applying the Jacobi-Zariski exact sequence of Andr\'e-Quillen homology
associated to the ring homomorphisms:
\begin{eqnarray*}
R/I\rightarrow \agr(I)\rightarrow \agr(I)[y_1,\ldots ,y_m]
\end{eqnarray*}
and the $\agr(I)[y_1,\ldots ,y_m]$-module $R/I$, for each $q\geq 2$,
one gets the isomorphism
\begin{eqnarray*}
E(\agr(I))_q\cong H_1(R/I,\agr(I),R/I)_q\cong
H_1(R/I,\agr(I)[y_1,\ldots ,y_m],R/I)_q.
\end{eqnarray*}
Analogously,
\begin{eqnarray*}
E(\agr(J))_q\cong H_1(S/J,\agr(J),S/J)_q\cong 
H_1(S/J,\agr(J)[x_1,\ldots, x_n],S/J)_q.
\end{eqnarray*}
Thus, for each $q\geq 2$, we have an isomorphism $E(\agr(I))_q\cong
E(\agr(J))_q$. In particular, $\rt(\agr(I))=\rt(\agr(J))$. It is known
that $\rt(\agr(I)))=\rt(\rees(I))$ (see \cite[Proposition~3.3]{planas}
or \cite[page 268]{hku}). Therefore the relation type of $\rees(I)$ is
equal to the relation type of $\rees(J)$, i.e., $\rt(I)=\rt(J)$.
\end{proof}

\begin{definition}
Let $A=R/I=k[x_1,\ldots ,x_n]/I$ be an affine $k$-algebra. We define
the {\em relation type} of $A$ as $\rt(A)=\rt(I)$, where $\rt(I)$ is
the relation type of the ideal $I$ of $R=k[x_1,\ldots ,x_n]$. If $V$
is an affine algebraic $k$-variety, the {\em relation type} of $V$ is
defined as $\rt(V)=\rt(k[V])$, the relation type of its coordinate
ring $k[V]$. By Theorem~\ref{th-invariance}, $\rt(A)$ and $\rt(V)$ are
well-defined.
\end{definition}

\begin{remark}
Let $I$ be an ideal of a Noetherian ring $R$ and let $J$ be an ideal
of a Noetherian ring $S$. Suppose that $R/I\cong S/J$. Then one cannot
deduce that $\rt(I)$ is equal to $\rt(J)$.
\end{remark}

\begin{example}
Consider the Neile's semicubical parabola $x^3-y^2=0$ in the complex
plane $\bc^2$ and its coordinate ring
$R=\bc[x,y]/(x^3-y^2)=\bc[\overline{x},\overline{y}]$, where
$\overline{x}$ and $\overline{y}$ stand for the classes of $x$ and $y$
in $R$. Let $\fm=(\overline{x}-1,\overline{y}-1)$ and
$\fn=(\overline{x},\overline{y})$ be the maximal ideals of $R$
corresponding to the regular point $(1,1)$ and to the origin,
respectively. Although the quotient rings $R/\fm$ and $R/\fn$ are
isomorphic (to $\bc$), $\rt(\fm)=1$ whereas $\rt(\fn)=2$. Note here
that $\fm=(x-1,y-1)/(x^3-y^2)$ and $\fn=(x,y)/(x^3-y^2)$, where
$(x-1,y-1)$ and $(x,y)$ are two ideals of the polynomial ring
$\bc[x,y]$ with $\bc[x,y]/(x-1,y-1)\cong \bc[x,y]/(x,y)$. In fact,
$\rt(x-1,y-1)=1$ and $\rt(x,y)=1$, since they are generated by a
regular sequence. However, $\fm$ and $\fn$ are ideals of $R$, which is
not a polynomial ring, and although $R/\fm\cong \bc[x,y]/(x-1,y-1)\cong
\bc[x,y]/(x,y)\cong R/\fn$, we cannot deduce that their relation type
coincide.
\end{example}
\begin{proof}
Since $x^3-y^2$ is irreducible, $R$ is a domain. From
$(x+y)(y-1)=(x^2+x+y)(x-1)-(x^3-y^2)$ one duduces that $\fm
R_{\mathfrak m}=(\overline{x}-1)R_{\mathfrak m}$ is locally generated
by a regular sequence, so $\rt(\fm)=1$ (see, e.g.,
\cite[Corollary~3.7]{hsv}). On the other hand, in $R$, we have the
strict inclusion of colon ideals
$(\overline{x}:\overline{y})\subsetneq \overline{x}\fn
:\overline{y}^2=R$. By \cite[Proposition~4.5]{planas}, $\rt(\fn)=2$.
\end{proof}

Following an idea of Eisenbud and Sturmfels in \cite[page~1]{es} and
as a consequence of Theorem~\ref{th-invariance}, we show that in order to
calculate the relation type of an ideal of a polynomial ring one can
suppose that the ideal is generated by trinomials, that is,
polynomials with at most three terms.

\begin{proposition}\label{trinomials}
Let $k$ be a field and $x_1,\ldots ,x_n$ variables over $k$. Let
$I=(f_1,\ldots ,f_s)$ be an ideal of $R=k[x_1,\ldots ,x_n]$. Then
there exist a polynomial ring $S=R[y_1,\ldots ,y_r]$ and a surjective
homomorphism of $R$-algebras $\sigma:S\to R$ such that
$\sigma^{-1}(I)$ is an ideal of $S$ generated by trinomials and such
that $\rt(I)=\rt(\sigma^{-1}(I))$.
\end{proposition}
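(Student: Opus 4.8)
The plan is to obtain the equality of relation types for free from Theorem~\ref{th-invariance} and to spend all the effort on building a convenient $\sigma$. For \emph{any} surjective homomorphism of $R$-algebras $\sigma\colon S=R[y_1,\ldots,y_r]\to R$, the map induced by $\sigma$ is an isomorphism of $k$-algebras $S/\sigma^{-1}(I)\cong R/I$; since both $S=k[x_1,\ldots,x_n,y_1,\ldots,y_r]$ and $R=k[x_1,\ldots,x_n]$ are polynomial rings over $k$, Theorem~\ref{th-invariance} yields $\rt(\sigma^{-1}(I))=\rt(I)$ no matter how $\sigma$ is chosen. Thus the whole content of the statement is the existence of one such $\sigma$ for which $\sigma^{-1}(I)$ admits a trinomial generating set.

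To analyse $\sigma^{-1}(I)$, I would first record that it equals $IS+\ker\sigma$ and that, $\sigma$ being an $R$-algebra retraction (the inclusion $R\hookrightarrow S$ is a section), one has $\ker\sigma=(y_1-\sigma(y_1),\ldots,y_r-\sigma(y_r))$. Hence two requirements must be met at once: each generator $f_i$ must become a trinomial modulo $\ker\sigma$, and $\ker\sigma$ itself must be generated by trinomials. The naive choice that makes $\sigma(y)$ equal to a long partial sum secures the first but ruins the second, since then $y-\sigma(y)$ carries many terms. The resolution, following Eisenbud--Sturmfels, is to name partial sums \emph{recursively}.

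Concretely, write each generator as $f_i=a_{i,1}+\cdots+a_{i,t_i}$, where the $a_{i,j}$ are the (coefficient-carrying) terms of $f_i$. For every $i$ with $t_i\geq 4$ I would introduce fresh variables $y_{i,1},\ldots,y_{i,t_i-2}$ and set $\sigma(y_{i,1})=a_{i,1}+a_{i,2}$ and $\sigma(y_{i,j})=\sigma(y_{i,j-1})+a_{i,j+1}$, so that $\sigma(y_{i,j})=a_{i,1}+\cdots+a_{i,j+1}$. Although each $\sigma(y_{i,j})$ is a long sum, the kernel relations can be taken in the telescoped form
\begin{eqnarray*}
y_{i,1}-a_{i,1}-a_{i,2},\qquad y_{i,j}-y_{i,j-1}-a_{i,j+1}\quad(2\le j\le t_i-2),
\end{eqnarray*}
each of which is a trinomial and which together generate the same ideal as the family $(y_{i,j}-\sigma(y_{i,j}))_{j}$. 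Collecting these over all $i$ (and leaving the generators with $t_i\le 3$ untouched) defines $\sigma$ and exhibits $\ker\sigma$ as trinomial-generated. Finally $f_i\equiv y_{i,t_i-2}+a_{i,t_i}\pmod{\ker\sigma}$, and the right-hand side is a trinomial lying in $\sigma^{-1}(I)$, so that $\sigma^{-1}(I)=IS+\ker\sigma$ is generated by these trinomials together with the telescoped kernel trinomials.

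The only genuinely delicate point, and the one I would verify with care, is the interplay just described: that replacing the full-sum kernel generators by the telescoped trinomials leaves $\ker\sigma$ unchanged, and that reducing each $f_i$ modulo $\ker\sigma$ legitimately replaces the long polynomial $f_i$ by the trinomial $y_{i,t_i-2}+a_{i,t_i}$ inside $\sigma^{-1}(I)=IS+\ker\sigma$. Everything else --- the identification $S/\sigma^{-1}(I)\cong R/I$, the description of $\ker\sigma$, and the closing appeal to Theorem~\ref{th-invariance} --- is formal, and the term-splitting is routine bookkeeping once the recursive, rather than absolute, naming of partial sums is adopted.
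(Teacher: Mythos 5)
Your proof is correct and follows essentially the same route as the paper: the Eisenbud--Sturmfels trick of naming partial sums recursively with fresh variables, so that the kernel relations (telescoped) and the reduced generators are all trinomials, followed by an appeal to Theorem~\ref{th-invariance}. The only differences are organizational --- the paper introduces the variables one at a time (grouping terms from the tail of each $f_i$ rather than the head) and invokes the invariance theorem at each intermediate step, whereas you define $\sigma$ in one shot and verify $\sigma^{-1}(I)=IS+\ker\sigma$ and $\ker\sigma=(y_j-\sigma(y_j))_j$ directly --- but the substance is identical.
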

\begin{proof}
Suppose that $f_1=g_1+\ldots +g_m$, where $g_i$ are monomial terms. If
$m\geq 4$, take $y_1$ a variable over $R$, set $R_1=R[y_1]$ and let
$\rho_1:R_1\to R$ be such that $\rho_1(y_1)=g_{m-1}+g_{m}$. Clearly
$\ker(\rho_1)=(y_1-(g_{m-1}+g_m))$. Let
$J_1=(y_1-(g_{m-1}+g_m),g_1+\ldots +g_{m-2}+y_1,f_2,\ldots
,f_s)$. Then $\rho_1(J_1)=I$ and, since $\ker(\rho_1)\subset J_1$,
$\rho_1^{-1}(I)=J_1$. In particular, $\rho_1$ induces an isomorphim of
$k$-algebras $R_1/J_1\cong R/I$ and, by Theorem~\ref{th-invariance},
$\rt(J_1)=\rt(I)$.

If $m=4$, $g_1+\ldots +g_{m-2}+y_1$ is already a trinomial. Suppose
that $m>4$. Recursively, for each $i=2,\ldots ,m-3$, take a new
variable $y_i$ over $R_{i-1}$, set $R_i=R_{i-1}[y_i]$ and let
$\rho_{i}:R_i\to R_{i-1}$ be such that
$\rho_i(y_i)=g_{m-i}+y_{i-1}$. Then
$\ker(\rho_i)=(y_i-(g_{m-i}+y_{i-1}))$. Let
\begin{eqnarray*}
J_i=(y_1-(g_{m-1}+g_m),\ldots ,y_i-(g_{m-i}+y_{i-1}),g_1+\ldots
+g_{m-i-1}+y_i, f_2,\ldots ,f_s).
\end{eqnarray*}
One has $\rho_i(J_i)=J_{i-1}$ and
$\rho_i^{-1}(J_{i-1})=J_i$. Therefore, $\rho_i$ induces an isomorphim
of $k$-algebras $R_i/J_{i}\cong R_{i-1}/J_{i-1}$ and
$\rt(J_i)=\rt(J_{i-1})$.  Set $S_1=R_{m-3}= R[y_1,\ldots ,y_{m-3}]$
and let $\sigma_1=\rho_{m-3}\circ\ldots\circ\rho_1$, where
$\sigma_1:S_1\to R$ is a surjective homomorphims of $R$-algebras. Set
$J=J_{m-3}$, where
\begin{eqnarray*}
J=J_{m-3}=(y_1-(g_{m-1}+g_m),\ldots
,y_{m-3}-(g_3+y_{m-2}),g_1+g_2+y_{m-3},f_2,\ldots ,f_s).
\end{eqnarray*}
Observe that $\sigma_1(J)=I$ and $\sigma^{-1}_1(I)=J$. Thus
$S_1/J=R_{m-3}/J_{m-3}\cong R_{m-4}/J_{m-4}$ and
$\rt(J)=\rt(J_{m-4})$, which is equal to $\rt(I)$. Note that in $J$ we
have replaced the polynomial $f_1$ by $m-2$ trinomials by introducing
$m-3$ new variables. To finish, proceed recursively with the rest of
the generators of $I$.
\end{proof}

\begin{remark}
An easy refinement of the argument above allows us to suppose that the
final ideal $\sigma^{-1}(I)$ is generated by polynomials of the
following kind: either monomials, or binomials with one of the two
terms being linear, or trinomials with all the three terms being
linear.
\end{remark}

\begin{remark}
From the proof of Proposition~\ref{trinomials} one obtains an
effective way to reduce a polynomial ideal to a trinomial ideal
preserving the relation type at the same time. However, its interest
seems more theoretical than practical due to its cost in introducing
more generators and more variables.
\end{remark}

\section{Equidimensional decomposition and relation 
type}\label{equidimensionalvsrt}

We start this section with some easy, but clarifying examples. From
now on, $R=k[x_1,\ldots ,x_n]$ will be a polynomial ring in $n$
variables $x_1,\ldots ,x_n$ over a field $k$, $I$ will be a proper
ideal of $R$ and $A=R/I$. Clearly if $R=k[x]$, then $R$ is a principal
ideal domain, and every proper ideal $I$ of $R$ is principal generated
by a nonzero divisor, hence $I$ is of linear type and $\rt(A)=1$. In
two variables we have the following simple example of a family of
algebras with unbounded relation type.

\begin{example}\label{rtIn=n}
Let $p\geq 1$ and $I=(x^p,y^p,x^{p-1}y)$ in $R=k[x,y]$. Set
$A=R/I=k[x,y]/I$. Then $\rt(I)=p$ and $\rt(A)=p$.
\end{example}
\begin{proof}
By \cite[Example~3.2]{planas}, we can localize at $\fm=(x,y)$ in order
to calculate the relation type of $I_p$. The result then follows from
\cite[Example~5.1]{mp}.
\end{proof}

However, the example above is not reduced. Allowing an arbitrary
number of variables, we give the following example of a family of
reduced algebras with unbounded relation type.

\begin{example}\label{cycle-even-length}
Let $p\geq 1$ and $n=2p$. Let $I$ be the monomial ideal of
$R=k[x_1,\ldots ,x_n]$ generated by the quadratic square free
monomials $x_1x_2,\ldots ,x_{n-1}x_n,x_nx_1$. Set $A=R/I$. Then
$\rt(I)=p$ and $\rt(A)=p$. Observe that $I$ is the edge ideal
$I(\mcg)$ associated to the graph $\mcg$, where $\mcg$ is a cycle of
even length $n=2p$.
\end{example}
\begin{proof}
The result follows from \cite[Section~3]{villarreal}.
\end{proof}

Before proceeding, we recall a central concept to our purposes in this
section. Given an irredundant primary decomposition of $I$, let
$I_{i,j}$ be the primary components of $I$ of a given height $i\geq
1$, for $j=1,\ldots ,r_i$. Set $I_i=I_{i,1}\cap\ldots\cap I_{i,r_{i}}$
and call it the $i$-th {\em equidimensional component of $I$}.  Note
that, for $i>\height(I)$, the $I_{i,j}$ (and hence $I_i$) are not
uniquely defined (see \cite[Definition~3.2.3]{vasconcelos-Comput}). To our
convenience, let us write $I_i=R$ if $I$ has no primary components of
height $i$. Henceford, $I$ can be expressed as $I=I_1\cap\ldots \cap
I_n$. Such a representation will be called an {\em equidimensional
  decomposition} of $I$ (associated to the given irredundant primary
decomposition of $I$). Note that, since the given primary
decomposition is irredundant, the equidimensional decomposition is
irredundant in the following sense: either $I_i=R$, or else $I_i$ is
an unmixed ideal of height $i$ such that $I_1\cap\ldots\cap
I_{i-1}\cap I_{i+1}\cap\ldots\cap I_n\not\subseteq I_i$. If $I$ is
radical, then each $I_i$ is either $R$, or else an unmixed radical
ideal of height $i$. With these notations, we start with the following
easy example.

\begin{example}\label{rt(I)=1}
Let $I$ be a proper radical ideal of $R=k[x,y]$. Then $\rt(I)=1$. In
particular, an affine $k$-algebra of embedding dimension at most $2$
has relation type $1$.
\end{example}
\begin{proof}
Let $I=I_1\cap I_2$ an equidimensional decomposition associated to an
irredundant primary decomposition of $I$, where either $I_1=R$, or
else $I_1=I_{1,1}\cap\ldots\cap I_{1,n_1}=(g_1)\cap\ldots\cap
(g_{n_1})=(g)$ is a principal ideal, with $g_j$ irreducible and
$g=g_1\cdots g_{n_1}$ (see \cite[Exercise~20.3]{matsumura}); moreover,
either $I_2=R$, or else $I_2=I_{2,1}\cap\ldots\cap I_{2,n_2}$, where
$I_{2,j}\in\Max(R)$, for $j=1,\ldots ,n_2$. If $I_1=R$, then
$I=I_2\neq R$. For each $\fm\in\Max(R)\setminus \{I_{2,1},\ldots
,I_{2,n_2}\}$, then $I_{\mathfrak{m}}=R_{\mathfrak{m}}$.  If
$\fm\in\{I_{2,1},\ldots ,I_{2,n_2}\}$, then
$I_{\mathfrak{m}}=\mathfrak{m}R_{\mathfrak{m}}$, which is generated by
a regular sequence. Thus
$\rt(I)=\sup\{\rt(I_{\mathfrak{m}})\mid\fm\in\Max(R)\}=1$. Suppose
that $I_1\neq R$. Then $I=(g)\cap I_2$. If $I_2=R$, then $I=(g)$ and
$\rt(I)=1$. Suppose that $I_2\neq R$. Then $g\not\in I_{2,j}$, due to
the irredundancy of the primary decomposition of $I$. Take $\fm$ a
maximal ideal of $R$ containing $I$. If $\fm$ contains $g$ (and hence
$\fm\neq I_{2,j}$ for all $j=1,\ldots ,n_2$), then
$I_{\mathfrak{m}}=(g)R_{\mathfrak{m}}$. If $\fm=I_{2,j}$, then
$I_{\mathfrak{m}}=\fm R_{\mathfrak{m}}$. So
$\rt(I)=\sup\{\rt(I_{\mathfrak{m}})\mid\fm\in\Max(R),\fm\supseteq
I\}=1$.
\end{proof}

The next result shows that, in embedding dimension $n\geq 3$ and
relation type at least $2$, the equidimensional components of
dimension $1$ and $n$ are irrelevant with respect to the relation
type.

\begin{proposition}\label{decomposition}
Let $I$ be a proper radical ideal of $R=k[x_1,\ldots ,x_n]$, $n\geq
3$. Let $I=I_1\cap\ldots\cap I_n$ be an equidimensional decomposition
of $I$. Then either $\rt(I)=1$, or $\rt(I)=\rt(I_2\cap\ldots\cap
I_{n-1})$, where $I_i\neq R$ for some $i=2,\ldots ,n-1$. (Both cases
may occur simultaneously.)
\end{proposition}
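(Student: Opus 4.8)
The goal is to show that the top-dimensional component $I_n$ (the height-$n$ part, i.e.\ the zero-dimensional maximal ideals) and the height-$1$ component $I_1$ (a principal ideal, since $R$ is a UFD) contribute nothing to the relation type once $\rt(I)\geq 2$. My plan is to exploit the local characterization $\rt(I)=\sup\{\rt(I_\fm)\mid \fm\in\Max(R),\ \fm\supseteq I\}$ recalled in Section~\ref{notations}, and to analyze the localizations $I_\fm$ maximal by maximal according to which equidimensional components $\fm$ sits inside.

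First I would set $I'=I_2\cap\ldots\cap I_{n-1}$ and write $I=I_1\cap I'\cap I_n$. I would dispose of the extreme pieces separately. For the top component $I_n$: its associated primes are maximal ideals $\fm$, and localizing at such an $\fm$ kills every other component not contained in $\fm$; by irredundancy $\fm$ contains no other height-$n$ prime and, generically, no lower component either, so that $I_\fm=\fm R_\fm$ is generated by a regular system of parameters, giving $\rt(I_\fm)=1$. Thus maximals lying over $I_n$ only ever produce relation type $1$ and cannot raise the supremum above $2$. For the height-$1$ component $I_1=(g)$: at a maximal $\fm$ containing $g$ but no other component, $I_\fm=(g)R_\fm$ is principal, hence of linear type, again contributing only $\rt=1$. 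The delicate maximals are those containing $I'$ (possibly together with $g$ or an embedded maximal), and the crux is to show that at such $\fm$ the factor $g$ and any zero-dimensional piece can be stripped off without changing $\rt(I_\fm)$.

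The main technical step, and the place I expect the real difficulty, is establishing that for a maximal ideal $\fm\supseteq I'$ one has $\rt(I_\fm)=\rt(I'_\fm)$ whenever this common value is at least $2$. The mechanism should be that multiplying by the extra factors corresponds to intersecting with an ideal that is locally principal (the image of $g$) or locally a complete intersection/regular sequence (the embedded maximal part), and intersecting a radical ideal with such a ``linear type'' piece that meets it in the right transversal way does not create new effective $q$-relations for $q\geq 2$. Concretely, I would compare the modules of effective relations $E(I_\fm)_q$ and $E(I'_\fm)_q$ through the colon-ideal criterion of \cite[Proposition~4.5]{planas} (the same tool used in the Neile parabola example): one must check that the relevant colon equalities $I_\fm^{q-1}:f = I_\fm^{q-2}\cdot(I_\fm:f)$ are insensitive to the extra factor $g$ once $q\geq 2$, using that $g$ is a nonzerodivisor modulo the relevant primes and that $R$ being a polynomial ring over a field makes $I'$ and $(g)$ meet without common components.

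Finally I would assemble the pieces: taking the supremum over all maximals containing $I$, the maximals over $I_1$ or $I_n$ alone yield $\rt=1$, while those over $I'$ yield $\rt(I'_\fm)$; hence if every local relation type is $1$ we land in the case $\rt(I)=1$, and otherwise the supremum equals $\sup_{\fm\supseteq I'}\rt(I'_\fm)=\rt(I')=\rt(I_2\cap\ldots\cap I_{n-1})$, with $I_i\neq R$ for some $i$ in the stated range (otherwise $I'=R$ forces $\rt(I)=1$). The observation that both cases can coincide simply records that $\rt(I')$ may itself equal $1$. The use of $n\geq 3$ enters precisely in guaranteeing that the middle range $\{2,\ldots,n-1\}$ is nonempty and that the extreme components are genuinely extreme; the hypothesis that $I$ is radical is what lets me treat $I_1=(g)$ as a squarefree principal ideal and each $I_{n,j}$ as a maximal ideal generated by a regular sequence.
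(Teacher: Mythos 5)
Your handling of the top component $I_n$ is correct and coincides with the paper's own argument: since $I$ is radical, its irredundant primary decomposition consists of minimal primes, which are pairwise incomparable; hence at $\fm=I_{n,j}$ every other component localizes to the unit ideal, $I_{\mathfrak{m}}=\fm R_{\mathfrak{m}}$ is generated by a regular sequence and has relation type $1$, while away from these maximals $I_n$ localizes to $R_{\mathfrak{m}}$. (Two small repairs to your wording: the lower components are \emph{never} contained in such an $\fm$, not just ``generically''; and for the same reason a maximal containing a component of $I'=I_2\cap\ldots\cap I_{n-1}$ can never equal one of the $I_{n,j}$, so your case ``$I'$ together with an embedded maximal'' does not occur.)

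The genuine gap is the step you yourself single out as the crux: that for a maximal $\fm\supseteq I'$ with $g\in\fm$ one has $\rt(I_{\mathfrak{m}})=\rt(I'_{\mathfrak{m}})$. Your plan to verify this via the colon criterion of \cite[Proposition~4.5]{planas} is neither carried out nor clearly applicable: that criterion is what the paper uses for the two-generated maximal ideal of the Neile parabola, and you give no reason why the equality $I_{\mathfrak{m}}^{q-1}:f=I_{\mathfrak{m}}^{q-2}\cdot(I_{\mathfrak{m}}:f)$ governs $E(I_{\mathfrak{m}})_q$ for an arbitrary localization, nor any argument for its ``insensitivity'' to the factor $g$. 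What is missing is a short global argument that makes all of this unnecessary. Set $L=I_2\cap\ldots\cap I_n$, so $I=(g)\cap L$. By irredundancy (equivalently, incomparability of the minimal primes) $g\notin I_{i,j}$ for every prime component $I_{i,j}$ of $L$; hence if $ag\in(g)\cap L$, primality of each $I_{i,j}$ forces $a\in I_{i,j}$, so $a\in L$ and $I=(g)\cap L=gL$. Since $g$ is a nonzerodivisor of the domain $R$, multiplication by $g$ identifies $L$ with $gL$ and induces an isomorphism of graded $R$-algebras $\rees(L)\cong\rees(gL)$ (alternatively, invoke the Andr\'e--Quillen characterization of the relation type, \cite[Remark~2.3]{planas}), whence $\rt(I)=\rt(gL)=\rt(L)$. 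This disposes of the height-one component in one stroke, globally; afterwards one strips $I_n$ from $L$ by exactly the localization argument you gave. Without the identity $(g)\cap L=gL$ and the invariance $\rt(gL)=\rt(L)$ --- the two facts your sketch gestures at but never states or proves --- your central step remains a plausible claim rather than a proof.
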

\begin{proof}
To simplify notations, set $L=I_2\cap\ldots\cap I_n$, so that
$I=I_1\cap L$. If $I_1=R$, then $L\neq R$ and $I=L$. Suppose that
$I_1\neq R$. Since $R$ is a unique factorisation domain, then
$I_1=(g)$ (see the proof of Example~\ref{rt(I)=1}). If $L=R$, then
$I=(g)$. Suppose that $L\neq R$. Write $I_i=I_{i,1}\cap\ldots\cap
I_{i,r_{i}}$ for all $i=2,\ldots ,n$, with $I_i\neq R$. Then $g\not\in
I_{i,j}$, for all $j=1,\ldots ,r_i$. Indeed, if $g\in I_{i,j}$, then
the given primary decomposition of $I$ would be redundant. Take now
$ag\in I=I_1\cap L=(g)\cap L$, with $a\in R$. Since $g\not\in
I_{i,j}$, for all for all $i=2,\ldots ,n$, with $I_i\neq R$, we deduce
that $a\in I_{i,j}$, because $I_{i,j}$ is prime. Therefore $a\in I_i$,
for all $i=2,\ldots ,n$, so that $ag\in g(I_2\cap\ldots\cap I_n)=gL$
and $I=gL$.

In conclusion, either $I=L$, or $I=(g)$, or $I=gL$, with $L\neq R$.
We know that $L$ and $gL$ have the same relation type (see, e.g., the
characterisation of the relation type in terms of the Andr\'e-Quillen
homology, \cite[Remark~2.3]{planas}). Therefore, either $\rt(I)=1$, or
$\rt(I)=\rt(L)$, where $L=I_2\cap\ldots\cap I_n\neq R$.

Suppose that $\rt(I)=\rt(L)$, with $L\neq R$. If $I_n=R$, then $L=
I_2\cap\ldots\cap I_{n-1}$ and $\rt(I)=\rt(I_2\cap\ldots\cap
I_{n-1})$, where $I_i\neq R$ for some $i=2,\ldots ,n-1$, and we are
done.

Suppose that $\rt(I)=\rt(L)$, with $L\neq R$, and that $I_n\neq
R$. Write $I_n=I_{n,1}\cap\ldots\cap I_{n,r_n}$, where $I_{n,j}$ are
maximal ideals of $R$. Set $J=I_2\cap\ldots\cap I_{n-1}$. Thus
$L=I_2\cap\ldots\cap I_{n-1}\cap I_n=J\cap I_n$. If $J=R$, then
$L=I_n$ and
$\rt(I)=\rt(L)=\rt(I_n)=\sup\{\rt((I_n)_{\mathfrak{m}})\mid
\fm\in\{I_{n,1},\ldots ,I_{n,r_n}\}\}=1$, because in this case,
$(I_n)_{\mathfrak{m}}=\fm R_{\mathfrak{m}}$, which is generated by a
regular sequence. Suppose that $J\neq R$.  For each
$\fm\in\Max(R)\setminus\{I_{n,1}\ldots ,I_{n,r_n}\}$,
$L_{\mathfrak{m}}=J_{\mathfrak{m}}\cap
(I_n)_{\mathfrak{m}}=J_{\mathfrak{m}}$, because
$(I_n)_{\mathfrak{m}}=R_{\mathfrak{m}}$. If $\fm\in\{I_{n,1},\ldots
,I_{n,r_n}\}$, then $J\not\subseteq \fm$, so
$J_{\mathfrak{m}}=R_{\mathfrak{m}}$,
$L_{\mathfrak{m}}=J_{\mathfrak{m}}\cap (I_{n})_{\mathfrak{m}}=\fm
R_{\mathfrak{m}}$ and $\rt(L_{\mathfrak{m}})=1$.  Thus,
\begin{eqnarray*}
&&\rt(L)=\sup\{\rt(L_{\mathfrak{m}})\mid\fm\in\Max(R)\}=
  \sup\{\rt(L_{\mathfrak{m}})\mid\fm\in\Max(R)\setminus\{I_{n,1}\ldots
  ,I_{n,r_n}\}\}=\\&& \sup\{\rt(J_{\mathfrak{m}})\mid
  \fm\in\Max(R)\setminus\{I_{n,1}\ldots
  ,I_{n,r_n}\}\}=\sup\{\rt(J_{\mathfrak{m}})\mid
  \fm\in\Max(R)\}=\rt(J).
\end{eqnarray*}
Therefore, $\rt(I)=\rt(L)=\rt(J)=\rt(I_2\cap\ldots\cap I_{n-1})$,
where $I_i\neq R$ for some $i=2,\ldots ,n-1$.
\end{proof}

\begin{proposition}\label{1or3}
Let $I$ be a proper radical ideal of $R=k[x,y,z]$. Then either
$\rt(I)=1$, or $\rt(I)=\rt(I_2)$, where $I_2\neq R$, the second
equidimensional component of $I$, is syzygetic. Moreover, either
$\rt(I)=1$, or else $\rt(I)\geq 3$.
\end{proposition}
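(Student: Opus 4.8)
The plan is to extract the dichotomy directly from Proposition~\ref{decomposition} and to spend the real effort on the syzygetic assertion, from which the numerical gap $\rt(I)\geq 3$ follows formally. First I would apply Proposition~\ref{decomposition} with $n=3$: the index range $i=2,\ldots,n-1$ collapses to the single value $i=2$, so either $\rt(I)=1$, or $\rt(I)=\rt(I_2)$ with $I_2\neq R$, where $I_2=I_{2,1}\cap\cdots\cap I_{2,r_2}$ is an unmixed radical ideal with $\height(I_2)=2$ and each $I_{2,j}$ a height-$2$ prime of $R=k[x,y,z]$. The whole statement then reduces to showing that $I_2$ is syzygetic, i.e.\ $E(I_2)_2=0$: indeed, by the description $\rt(I_2)=\min\{r\geq 1\mid E(I_2)_q=0\text{ for all }q\geq r+1\}$ recalled in Section~\ref{notations}, the vanishing of $E(I_2)_2$ excludes $\rt(I_2)=2$, leaving $\rt(I_2)=1$ or $\rt(I_2)\geq 3$; combined with the first alternative this gives $\rt(I)=1$ or $\rt(I)\geq 3$.

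To prove $E(I_2)_2=0$ I would argue locally, using that the formation of $E(-)_2$ commutes with localization, so that $I_2$ is syzygetic if and only if $(I_2)_{\fm}$ is syzygetic for every $\fm\in\Max(R)$ with $\fm\supseteq I_2$; such $\fm$ have height $3$, so $R_{\fm}$ is a regular local ring of dimension $3$. Since $I_2$ is radical of height $2$, the ring $R/I_2$ is reduced of dimension $1$, hence Cohen--Macaulay; consequently $(I_2)_{\fm}$ is a perfect (Cohen--Macaulay) ideal of grade $2$. Moreover $I_2$ is generically a complete intersection: at a minimal prime $\fp=I_{2,j}$ (of height $2$) one has $(I_2)_{\fp}=\fp R_{\fp}$, the maximal ideal of the two-dimensional regular local ring $R_{\fp}$, generated by a regular sequence and so of linear type. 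Therefore $E(I_2)_2$ vanishes at every minimal prime of $I_2$, and as a module over the one-dimensional ring $R/I_2$ it is supported only at maximal ideals; equivalently, working at a fixed $\fm\supseteq I_2$, the module $E((I_2)_{\fm})_2$ has finite length.

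The crux is to upgrade ``finite length'' to ``zero''. For this I would invoke the syzygetic exact sequence of $R/I_2$-modules $0\to E(I_2)_2\to H_1\to (R/I_2)^{\mu}\to I_2/I_2^2\to 0$, in which $H_1$ is the first Koszul homology of a generating set of $I_2$ and the map $H_1\to (R/I_2)^{\mu}$ is induced by the inclusion of the syzygy module (see, e.g., \cite{hsv}); in particular $E(I_2)_2$ embeds into $H_1$. Now a grade-$2$ perfect ideal is strongly Cohen--Macaulay, being in the linkage class of a complete intersection, so $H_1$ is either zero (when $I_2$ is locally a complete intersection at $\fm$, whence $E(I_2)_2=0$ at once) or a Cohen--Macaulay $R/I_2$-module of dimension $\dim R/I_2=1$, hence of depth $1$. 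A module of positive depth has no nonzero submodule of finite length, so the finite-length module $E((I_2)_{\fm})_2\hookrightarrow H_1$ must vanish for every $\fm\supseteq I_2$, giving $E(I_2)_2=0$. I expect this to be the main obstacle: one must couple the generic complete intersection property (which forces $E(I_2)_2$ to have finite length) with a depth input via strong Cohen--Macaulayness (which forces its vanishing). The radical hypothesis is essential here, since syzygeticity genuinely fails for non-reduced grade-$2$ perfect ideals such as $(x,y)^2$, for which $E(-)_2\neq 0$.

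Finally, substituting $E(I_2)_2=0$ into the first step completes the proof: either $\rt(I)=1$, or $\rt(I)=\rt(I_2)$ with $I_2\neq R$ syzygetic and $\rt(I_2)\geq 3$, so in all cases $\rt(I)=1$ or $\rt(I)\geq 3$. As an independent check, syzygeticity could also be verified through the colon-ideal criterion \cite[Proposition~4.5]{planas} applied to the two-generated local models, exactly along the lines of the Neile parabola computation carried out above.
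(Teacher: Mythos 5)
Your proof is correct, and its skeleton coincides with the paper's: reduce to $I_2$ via Proposition~\ref{decomposition}, establish that $I_2$ is syzygetic, and note that $E(I_2)_2=0$ forbids relation type exactly two. The difference lies in how the syzygeticity and the final dichotomy are obtained. For syzygeticity the paper simply cites the remark following \cite[Proposition~2.7]{hsv}, applicable because $I_2$ is perfect of projective dimension one and generically a complete intersection; you instead reconstruct that result: localization reduces the claim to the vanishing of a finite-length module, the exact sequence $0\to E(I_2)_2\to H_1\to (R/I_2)^{\mu}\to I_2/I_2^2\to 0$ embeds $E(I_2)_2$ into the first Koszul homology, and strong Cohen--Macaulayness of grade-two perfect ideals (Avramov--Herzog, or Huneke's theorem on ideals in the linkage class of a complete intersection) gives $H_1$ depth one, hence no nonzero finite-length submodules. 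This is essentially the argument behind the paper's citation, so what you buy is self-containedness, at the price of invoking the licci/SCM machinery explicitly. Your endgame is actually cleaner than the paper's: the paper splits into the case of at most three generators (linear type, by \cite[Theorem~4.8]{hmv}) and the case of at least four generators (not of linear type, by \cite[Proposition~2.4]{hsv} or \cite[Theorem~5.1]{tchernev}, hence $\rt(I_2)\geq 2$ and then $\geq 3$ by syzygeticity), whereas you observe that syzygeticity alone excludes $\rt(I_2)=2$, so no generator count is needed. The paper's case split does record the extra practical criterion that four or more minimal generators forces $\rt\geq 3$, which is the form in which the proposition is exploited in Example~\ref{int2HN}; your version supports that application equally well, since there the non-linear-type hypothesis is drawn directly from \cite[Proposition~2.4]{hsv}. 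Your closing aside about the colon-ideal criterion of \cite[Proposition~4.5]{planas} is vague, but it is offered only as an optional check and plays no role in the argument.
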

\begin{proof}
By Proposition~\ref{decomposition}, we can suppose that
$\rt(I)=\rt(I_2)$, where $I_2=I_{2,1}\cap\ldots\cap I_{2,n_2}\neq R$,
with $I_{2,j}$ prime ideals of $R$ of height $2$. In particular, $I_2$
is generically a complete intersection (i.e., a complete intersection
localized at each minimal prime of $I$), and a perfect ideal of
projective dimension $1$. Hence $I_2$ is syzygetic, i.e.,
$E(I_2)_2=\ker(\alpha_2:\sym_2(I_2)\to I_2^2)=0$ (see the subsequent
Remark to \cite[Proposition~2.7]{hsv}).

If $I_2$ is a complete intersection or an almost complete
intersection, then $I_2$ is of linear type (see, e.g.,
\cite[Theorem~4.8]{hmv}). Thus, $\rt(I)=\rt(I_2)=1$.

If $I_2$ is generated by at least fours elements, then $I_2$ is not of
linear type (see \cite[Proposition~2.4]{hsv}; see also
\cite[Theorem~5.1]{tchernev}). Thus $\rt(I_2)\geq 2$. Moreover,
$\rt(I_2)>2$. Indeed, if $\rt(I_2)\leq 2$, since $\rt(I_2)=\min\{
r\geq 1\mid E(I_2)_q=0\mbox{ for all }q\geq r+1\}$, then $E(I_2)_q=0$
for all $q\geq 3$. Since $E(I_2)_2=0$, then it would follow that
$\rt(I_2)=1$, a contradiction. Therefore $\rt(I)=\rt(I_2)\geq
3$. (Note that we do not affirm that $I$ is syzygetic.)
\end{proof}

As an immediate consequence we have the following result.

\begin{corollary}\label{nonexistence}
There do not exist affine $k$-algebras of embedding dimension $3$ and
relation type $2$. There do not exist affine algebraic $k$-varieties
of embedding dimension $3$ and relation type $2$.
\end{corollary}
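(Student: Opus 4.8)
The plan is to read both statements off directly from Proposition~\ref{1or3}, since the corollary is advertised as an immediate consequence. First I would recall that, by Theorem~\ref{th-invariance}, the relation type depends only on the isomorphism class of the algebra and not on the chosen presentation; this is precisely what makes $\rt(A)$ and $\rt(V)$ well defined. An affine $k$-algebra (resp. an affine algebraic variety) of embedding dimension $3$ therefore admits a presentation $A=k[x,y,z]/I$ (resp. $k[V]=k[x,y,z]/I$) in exactly three variables, where $I$ is a proper ideal of $R=k[x,y,z]$; and because a coordinate ring is reduced, $I$ may be taken to be a proper \emph{radical} ideal.

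Next I would simply invoke Proposition~\ref{1or3}: for a proper radical ideal $I$ of $k[x,y,z]$ one has the dichotomy $\rt(I)=1$ or $\rt(I)\geq 3$, the intermediate value $2$ being ruled out because the second equidimensional component $I_2$ is syzygetic, so that $E(I_2)_2=0$ and hence $\rt(I_2)\leq 2$ would collapse to $\rt(I_2)=1$. Combining this with $\rt(A)=\rt(I)$ (resp. $\rt(V)=\rt(I)$) gives $\rt(A)\neq 2$ (resp. $\rt(V)\neq 2$) at once, which is exactly the content of the corollary. No computation beyond quoting the proposition is needed; the whole weight of the argument sits in the earlier equidimensional analysis that establishes the $1$-or-$\geq 3$ gap.

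The one point that deserves care, and which I would regard as the only (minor) obstacle, is the translation between the hypothesis \textbf{embedding dimension $3$} and the existence of a presentation by a radical ideal in precisely three variables. I would check that the minimality encoded in the embedding dimension genuinely places us in $R=k[x,y,z]$, rather than in fewer variables where Example~\ref{rt(I)=1} already forces $\rt=1$, and that it is exactly the reducedness of the coordinate ring that legitimises applying Proposition~\ref{1or3}, whose statement is restricted to radical ideals. Once this bookkeeping is in place, both assertions follow immediately.
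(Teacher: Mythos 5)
Your proposal is correct and coincides with the paper's own (unwritten) argument: the corollary is stated there as an immediate consequence of Proposition~\ref{1or3}, and your steps---well-definedness of $\rt$ via Theorem~\ref{th-invariance}, a presentation $A\cong k[x,y,z]/I$ with $I$ proper and radical, then the $\rt(I)=1$ or $\rt(I)\geq 3$ dichotomy---are exactly what ``immediate'' compresses. Your closing caveat about reducedness is the one genuinely essential point: the statement holds only under the paper's implicit (Kunz-style) convention that affine $k$-algebras are reduced, since otherwise $k[x,y,z]/(x,y)^2$, which has embedding dimension $3$ and relation type $2$, would be a counterexample.
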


\section{Examples in embedding dimension three}\label{examples}

In this section we focus our attention on affine $k$-algebras of
embedding dimension three. Our purpose is to give illustrative
examples of affine algebras with different relation types. So now,
$R=k[x,y,z]$ will be a polynomial ring in three variables $x,y,z$ over
a field $k$, $I$ will be a proper radical ideal of $R$ and
$A=R/I$. According to Proposition~\ref{1or3}, we can suppose that $I$
is an irredundant intersection of prime ideals of height $2$. In
particular, $I$ is a perfect ideal of projective dimension
$1$. Suppose that $I=(f_1,\ldots ,f_s)$ is minimally generated by
$s\geq 2$ elements and that
\begin{eqnarray*}
0\longrightarrow R^{s-1}\buildrel\eta\over\longrightarrow
R^{s}\longrightarrow I\longrightarrow 0
\end{eqnarray*}
is a presentation of $I$. By the Theorem of Hilbert-Burch, there
exists an element $g\in R$, $g\neq 0$, such that $I=gI_{s-1}(\eta)$,
where $I_{s-1}(\eta)$ is the determinantal ideal generated by the
$(s-1)\times (s-1)$ minors of the $s\times (s-1)$ matrix $\eta$ (see,
e.g., \cite[Theorem~1.4.16]{bh}). In particular,
$\rt(I)=\rt(I_{s-1}(\eta))$. Therefore, in terms of the computation of
the relation type, we can directly suppose that $I=I_{s-1}(\eta)$.

On taking the symmetric functor in the short
exact sequence above one gets:
\begin{eqnarray*}
0\longrightarrow L\langle 1\rangle \longrightarrow
S=\sym(R^s)=R[t_1,\ldots ,t_s]\xrightarrow{\Phi}
\sym(I)\longrightarrow 0,
\end{eqnarray*}
where $L\langle 1\rangle$ is the defining ideal of $\sym(I)$. Recall
the notation in Section~\ref{notations}, where $L=\ker(\varphi)$ is
the ideal of equations of $I$ given by the polynomial presentation
$\varphi:\alpha\circ\Phi:S\to \rees(I)$ and
$\alpha:\sym(I)\to\rees(I)$ is the canonical morphism. If we denote by
$[t_1,\ldots ,t_s]$ the $1\times s$ matrix of entries $t_i$, then
$L\langle 1\rangle=(g_1,\ldots,g_s)S$, with $[g_1\ldots
  ,g_s]=[t_1,\ldots ,t_s]\cdot\eta$. One can write this last
expression as
\begin{eqnarray*}
[g_1\ldots ,g_s]=[t_1,\ldots ,t_s]\cdot\eta =[x,y,z]\cdot B(\eta),
\end{eqnarray*}
where $B(\eta)$ is a $3\times s$ matrix of linear forms in the
variables $t_i$. This matrix is called a {\em Jacobian dual} matrix of
$\eta$. One can show that the determinantal ideal $I_3(B(\eta))$
generated by the $3\times 3$ minors of $B(\eta)$, is included in $L$,
the ideal of equations of $I$ (see
\cite[Proposition~7.2.3]{vasconcelos-Comput}).

The ideal $I$ is said to have the {\em expected equations} if
$L=(L\langle 1\rangle, I_3(B(\eta)))$ (see
\cite[Definition~1.8]{vasconcelos-Integral}; see \cite[Introduction
  and Section~3.1]{vasconcelos-Crelle}, where this ideas were first
stated; see also \cite{morey}, \cite{mu}). Most of the results on the
expected equations of an ideal $I$ of a ring $R$, suppose that either
$R$ is a Noetherian local ring, or else $R$ is a standard graded and
$I$ is a homogeneous ideal.

Although in our case $I$ is not homogeneous, we can use these
techniques as a first approach to obtain the equations of $I$ and its
relation type, as the next example shows.

\begin{example}\label{int2herzog}
Let $n=(n_1,n_2,n_3)$, $m=(m_1,m_2,m_3)\in\bn^3$, with $\gcd(n)=1$ and
$\gcd(m)=1$. Let $\fp_n$ be the kernel of the $k$-homomorphism
$R=k[x,y,z]\to k[t]$ which sends $x$, $y$ and $z$, to $t^{n_1}$,
$t^{n_2}$ and $t^{n_3}$, respectively. Similarly, one defines
$\fp_m$. Clearly, $\fp_n$ and $\fp_m$ are prime. It is known that they
are either a complete intersection, or else an almost complete
intersection, and in particular, of relation type $1$ (see, e.g.,
\cite[Example~V.3.13,~$f)$]{kunz} and \cite[Theorem~4.8]{hmv}). Fix
now $n=(3,4,5)$ and $m=(3,4,3r)$, for some $r\geq 3$ and let
$I=\fp_n\cap\fp_m$. Using {\sc Singular} \cite{singular}, one gets the
minimal system of generators for $I$:
\begin{eqnarray*}
&&f_1=x^4-y^3\mbox{ , }f_2=-x^{r+1}z+x^ry^2+xz^2-y^2z\mbox{ , }\\&&
f_3=-x^{r+3}+x^ryz+x^3z-yz^2\mbox{ and }f_4=-x^{r+2}y+x^rz^2+x^2yz-z^3,
\end{eqnarray*}
and the presentation $0\longrightarrow
R^3\buildrel\eta\over\longrightarrow R^4\longrightarrow
I\longrightarrow 0$, with
\begin{eqnarray*}
\eta=\left(\begin{array}{ccc}
  \phantom{m}z-x^r&0&0\\-y&z&-x^2\phantom{m}\\-x&-y\phantom{m}&z
  \\\phantom{m}0&x&-y\phantom{m}
\end{array}\right).
\end{eqnarray*}
In other words, $I$ is generated by the $3\times 3$ minors of the
$4\times 3$ matrix $\eta$. Thus $L\langle 1\rangle$ is generated by
$g_1,g_2,g_3$, where $[g_1,g_2,g_3]=[t_1,\ldots ,t_4]\cdot
\eta=[x,y,z]\cdot B(\eta)$, and $B(\eta)$ is the Jacobian dual of
$\eta$. Concretely,
\begin{eqnarray*}
B(\eta)=\left(\begin{array}{ccc}
-t_3-x^{r-1}t_1&t_4&-xt_2\\ 
-t_2&-t_3\phantom{m}&-t_4\\ 
\phantom{m}t_1&t_2&\phantom{n}t_3
\end{array}\right).
\end{eqnarray*}
Using Singular \cite{singular} again, one deduces that
$L=\ker(\varphi)$ is generated by $g_1,g_2,g_3$ and $\det(B)$. In
particular, $\rt(I)=3$. Note that, as in
\cite[Theorem~3.1.1]{vasconcelos-Crelle}, $I$ has the {\em expected
  equations}, three of degree $1$ and exactly one of degree $3$,
though in the aforementioned result, $R$ is supposed to be a regular
local ring and $I$ is supposed to be a prime ideal, whereas in our
case, $R=k[x,y,z]$ and $I$ is not prime nor homogeneous.
\end{example}

The next example shows that all the cases arising in the proof of
Proposition~\ref{1or3} can occur.

\begin{example}\label{int2HN}
Let us consider the intersection of two ideals of Herzog-Northcott
type. Recall that an ideal of Herzog-Northcott type of $R=k[x,y,z]$ is
the determinantal ideal $J$ generated by the $2\times 2$ minors of a
$2\times 3$ matrix with rows $x^{a_1},y^{a_2},z^{a_3}$ and
$y^{b_{2}},z^{b_3},x^{b_{1}}$, where $(a_1,a_2,a_3)$ and
$(b_1,b_2,b_3)\in\bn^3$, and $\bn$ is the set of positive integers.
In other words,
$J=(x^{c_1}-y^{b_2}z^{a_3},y^{c_2}-x^{a_1}z^{b_3},z^{c_3}-x^{b_1}y^{a_2})$,
where $c_i=a_i+b_i$, for $i=1,2,3$.  It is known that such an ideal
$J$ is of linear type and, if $k$ has characteristic zero (or large
enough), then $J$ is radical (see \cite[Remark~6.4 and
  Theorem~9.1]{hn}). Moreover, $J$ is prime if and only if
$\gcd(m(J))=1$, where
$m(J)=(c_2c_3-a_2b_3,c_1c_3-a_3b_1,c_1c_2-a_1b_2)$ (see
\cite[Definition~7.1, Remark~7.2 and Theorem~7.8]{hn}).

Consider now the following four ideals of Herzog-Northcott type:
$J_1=(x^3-yz,y^3-x^2z,z^2-xy^2)$, $J_2=(x^3-yz,y^2-xz,z^2-x^2y)$,
$J_3=(x^2-y^2z,y^3-xz,z^2-xy)$ and $J_4=(x^2-yz,y^2-xz^2,z^3-xy)$.
Observe that $m(J_1)=(4,5,7)$, $m(J_2)=(3,4,5)$, $m(J_3)=(5,3,4)$ and
$m(J_4)=(4,5,3)$. In particular, $J_i$ are prime ideals and $J_i\cap
J_j$ are Cohen-Macaulay ideals of projective dimension $1$, for
$i,j=1,2,3,4$.  {\sc Singular} (\cite{singular}) shows that $J_1\cap
J_2$ is a complete intersection and that $J_1\cap J_4$ is an almost
complete intersection, in particular, $J_1\cap J_2$ and $J_1\cap J_4$
are ideals of linear type. However, $J_1\cap J_3$ is minimally
generated by four elements. Therefore $J_1\cap J_3$ is not of linear
type type (see \cite[Proposition~2.4]{hsv}). By
Proposition~\ref{1or3}, $J_1\cap J_3$ is syzygetic and has relation
type at least $3$.
\end{example}

\section{The relation type of irreducible affine space 
curves}\label{spacecurves}

In this section, we give some examples where $I=\fp$ is a prime ideal
of height $2$ in $R=k[x,y,z]$. Alternatively, these are examples of
irreducible affine algebraic curves $V$ in the three dimensional
affine $k$-space $\ba^3(k)$. We start with monomial curves.

\begin{example}\label{e-herzog}
Let $m=(m_1,m_2,m_3)\in\bn^3$ with $\gcd(m_1,m_2,m_3)=1$. Let
$V\subset \ba^3(k)$ be the parametrized curve
$V=\{(\lambda^{m_1},\lambda^{m_2},\lambda^{m_3})
\in\ba^3(k)\mid\lambda\in k\}$. Suppose that $k$ is infinite. Then
$\rt(V)=1$.
\end{example}
\begin{proof}
Let $\fp\subset R=k[x,y,z]$ be the kernel of the $k$-homomorphism
$R=k[x,y,z]\to k[t]$ sending $x$, $y$, $z$ to $t^{m_1}$, $t^{m_2}$,
$t^{m_3}$, respectively. According to \cite[Lemma~3.4]{ev}, if
$\gcd(m_1,m_2,m_3)=1$, then $V= \fV(\fp)$ is the affine algebraic
$k$-variety defined by the ideal $\fp\subset R=k[x,y,z]$. Moreover if
$k$ is infinite, the vanishing ideal of $V$ is $\fI(V)=\fp$ (see,
e.g., \cite[Corollary~7.1.12]{villarreal-book}). Thus
$k[V]=k[x,y,z]/\fI(V)=k[x,y,z]/\fp$, where $\fp$ is either a complete
intersection ideal, or else an almost complete intersection, hence
$\fp$ is of linear type (\cite[Example~V.3.13,~$f)$]{kunz} and
\cite[Theorem~4.8]{hmv}).  In particular, $\rt(\fp)=1$ and $\rt(V)=1$.
\end{proof}

The following example is taken from
\cite[Section~3.1]{vasconcelos-Crelle}. It also shows that the
relation type may depend on the characteristic of the ground field.

\begin{example}\label{e-vasconcelos}
Let $\fp\subset R=k[x,y,z]$ be the kernel of the $k$-homomorphism
$R=k[x,y,z]\to k[t]$ which sends $x$, $y$ and $z$ to $t^{6}$, $t^{8}$
and $t^{10}+t^{11}$, respectively.  Let $V=\fV(\fp)\subset \ba^3(k)$
be the affine algebraic curve defined by $\fp$. If $k$ is infinite,
then $\rt(V)=3$. If $k=\bz/2\bz$, $\rt(V)=1$.
\end{example}
\begin{proof}
Let $W=\{(\lambda^6,\lambda^8,\lambda^{10}+\lambda^{11})\mid
\lambda\in k\}$. For all $f\in\fp$,
$f(\lambda^6,\lambda^8,\lambda^{10}+\lambda^{11})=\psi(f)(\lambda)=0$
and $W\subseteq V=\fV(\fp)$.  In particular, $\fp\subseteq
\fI(\fV(\fp))=\fI(V) \subseteq \fI(W)$. Let $f\in\fI(W)$, i.e.,
$f(\lambda^6,\lambda^8,\lambda^{10}+\lambda^{11})=0$, for all
$\lambda\in k$. Set $g(t)=f(t^6,t^8,t^{10}+t^{11})=\psi(f)$. Then
$g(\lambda)=0$, for all $\lambda\in k$. Suppose that $k$ is
infinite. Then $g(t)=0$ and $f\in\fp$. Thus $\fI(W)=\fp$, so
$\fI(V)=\fp$. Using {\sc Singular}, one deduces that $\fp$ is
minimally generated by four elements and that $\fp$ has the expected
equations (see also
\cite[Proposition~3.1.1]{vasconcelos-Crelle}). Thus $\rt(\fp)=3$ and
$\rt(V)=3$. Suppose that $k=\bz/2\bz$. Clearly $(0,0,0)$ and $(1,1,0)$
are in $W\subseteq V=\fV(\fp)$.  Moreover $f_1=x^4+y^3$ and
$f_2=x^2y+xy^2+z^2$, say, are easily seen to be in $\fp$, so in
$\fI(V)$. Since $f_1$ does not vanish on $(0,1,0)$, $(0,1,1)$,
$(1,0,0)$ and $(1,0,1)$ and $f_2$ does not vanish on $(0,0,1)$ and
$(1,1,1)$, it follows that these six points are not in $V$. Hence
$W=V=\{(0,0,0),(1,1,0)\}$ and $\fI(W)=\fI(V)$. Clearly
$x+y,z\in\fI(V)\setminus \fp$ and $\fp\subsetneq \fI(V)$. In fact,
$\fI(V)=(x+y,z)$, which is generated by a regular sequence, so
$\fI(V)$ is of linear type.
\end{proof}

The next curve has relation type $3$ too (if $k$ is
infinite). Moreover, when $k=\bc$, it is known to be a
set-theoretically complete intersection (see
\cite[Example~3.7]{huneke}).

\begin{example}\label{e-huneke}
Let $\fp\subset R=k[x,y,z]$ be the kernel of the $k$-homomorphism
$R=k[x,y,z]\to k[t]$ which sends $x$, $y$ and $z$ to $t^{6}$,
$t^{7}+t^{10}$ and $t^{8}$, respectively.  Let $V=\fV(\fp)\subset
\ba^3(k)$ be the affine algebraic curve defined by $\fp$.  If $k$ is infinite,
then $\rt(V)=3$. If $k=\bz/2\bz$, $\rt(V)=1$.
\end{example}
\begin{proof}
The proof is analogous to the former one. To deduce the equations of
$\fI(V)$ one can use {\sc Singular} (see \cite[Proposition~3.1.1 and
  Example~3.2.1]{vasconcelos-Crelle}) and
\cite[Example~3.7]{huneke}).
\end{proof}

\begin{remark}
In the examples above (\ref{e-herzog}, \ref{e-vasconcelos} or
\ref{e-huneke}) we have obtained irreducible affine algebraic space
curves with relation type $1$ and $3$. What are the geometric
properties that make them have different relation type? What is the
geometric meaning of the relation type of an irreducible affine
algebraic space curve?
\end{remark}

\begin{closingremark}
Although the examples above do not exceed four generators, they have
costly computations. To obtain a general procedure to find the
equations of prime ideals minimally generated by an arbitrary number
of elements seems a very difficult task. For instance, one could ask
for the equations, or just the relation type, of the prime ideals
defined by Moh in \cite{moh}, a question to which we do not have an
answer at the present moment. We intend to pursue this problem in
future work. 
\end{closingremark}

\section*{Acknowledgment} 
The question whether the relation type of an ideal is an invariant of
the quotient ring was raised to me by Jos\'e M. Giral. I thank him for
his generosity and for sharing his ideas and knowledge. I also want to
thank Philippe Gimenez, Liam O'Carroll, Bernd Ulrich, Wolmer
V. Vasconcelos and Santiago Zarzuela for stimulating comments and
suggestions.


{\small
}

\vspace{0.2cm}
{\footnotesize 

\noindent {\sc Departament de Matem\`atica Aplicada~1, Universitat
  Polit\`ecnica de Catalunya}, \\ Diagonal 647, ETSEIB, 08028 Barcelona,
Catalunya. {\em E-mail address}: francesc.planas@upc.edu }
\end{document}